\newtheorem{theorem}{Theorem}
\newtheorem{proposition}{Proposition}
\newtheorem{corollary}{Corollary}
\newtheorem{remark}{Remark}
\newtheorem{ex}{Example}
\newtheorem{definition}{Definition}
\newtheorem{opq}{Open Question}
\newcommand{\switcheddelay }{{linear feedback system with switched delays}}
\newcommand{\re}{{\mathbb R}}
\newcommand{\n}{{\mathbb N}}
\newcommand{\cq}{{\mathbb{Q}}}
\title{Feedback stabilization of dynamical systems with switched delays$^*$}
\author{Rapha\"el M. Jungers$^1$, Alessandro D'Innocenzo$^2$ and Maria D. Di Benedetto$^2$
\thanks{$^1$ICTEAM Institute, Universit\'e catholique de Louvain, Louvain-la-Neuve, Belgium. Email: {\texttt{raphael.jungers@uclouvain.be}}}
\thanks{$^2$Department of Electrical and Information Engineering, Center of Excellence DEWS, University of L'Aquila, Italy. Email: {\texttt{alessandro.dinnocenzo@univaq.it}}, {\texttt{mariadomenica.dibenedetto@univaq.it}}}
\thanks{$^*$The research leading to these results has received funding from the European Union Seventh Framework Programme [FP7/2007-2013] under grant agreement n°257462 HYCON2 Network of excellence. R.J. is supported by the "Communaut\'e francaise de Belgique - Actions de Recherche Concert\'ees", and by the Belgian Programme on Interuniversity Attraction Poles initiated by the Belgian Federal Science Policy Office.  R.J. is a F.R.S.-FNRS fellow.}
}
\begin{document}

\maketitle
\thispagestyle{empty}
\pagestyle{empty}


\begin{abstract}We analyze a classification of two main families of controllers that are of interest when the feedback loop is subject to switching propagation delays due to routing via a wireless multi-hop communication network. We show that we can cast this problem as a subclass of classical switching systems, which is a non-trivial generalization of classical LTI systems with time-varying delays.

We consider both cases where delay-dependent and delay-independent controllers are used, and show that both can be modeled as switching systems with unconstrained switchings. We provide NP-hardness results for the stability verification problem, and propose a general methodology for approximate stability analysis with arbitrary precision. We finally give evidence that non-trivial design problems arise for which new algorithmic methods are needed.\end{abstract}


\section{Introduction}\label{secIntro}

Wireless networked control systems are spatially distributed control systems where the communication between sensors, actuators, and computational units is supported by a wireless multi-hop communication network. The main motivation for studying such systems is the emerging use of wireless technologies in control systems (see e.g. \cite{akyildiz_wireless_2004}, \cite{SongIECON2010} and references therein) and the recent development of wireless industrial control protocols, such as WirelessHART (\texttt{www.hartcomm2.org}) and ISA-100 (\texttt{www.isa.org}). The use of wireless Multi-hop Control Networks (MCNs) in industrial automation results in flexible architectures and generally reduces installation, debugging, diagnostic and maintenance costs with respect to wired networks. Although MCNs offer many advantages, their use for control is a challenge when one has to take into account the joint dynamics of the plant and of the communication protocol (e.g. scheduling and routing).

Wide deployment of wireless industrial automation requires substantial progress in wireless transmission, networking and control, in order to provide formal models and analysis/design methodologies for MCNs. Recently, a huge effort has been made in scientific research on Networked Control Systems (see e.g.~\cite{Zhang2001},~\cite{WalshCSM2001},~\cite{SpecialIssueNCS2004},~\cite{Andersson:CDC05},~\cite{MurrayTAC2009},~\cite{Hespanha2007},\cite{HeemelsTAC10}, and references therein for a general overview) and on the interaction between control systems and communication protocols (see e.g.~\cite{Astrom97j1},~\cite{walsh_stability_2002},~\cite{yook_trading_2002},~\cite{TabbaraCDC2007},~\cite{TabbaraTAC2007}). In general, the literature on NCSs addresses non--idealities (e.g. quantization errors, packets dropouts, variable sampling and delay, communication constraints) as aggregated network performance variables, neglecting the dynamics introduced by the communication protocols. In~\cite{Andersson:CDC05}, a simulative environment of computer nodes and communication networks interacting with the continuous-time dynamics of the real world is presented. To the best of our knowledge, the first integrated framework for analysis and co-design of network topology, scheduling, routing and control in a MCN has been presented in \cite{AlurTAC11}, where switching systems are used as a unifying formalism for control algorithms and communication protocols. In \cite{DiBenedettoIFAC11Stab}, stabilizability of a MCN has been addressed for SISO LTI plants. In \cite{PappasCDC2011} a MCN is defined as an autonomous system where the wireless network \emph{itself} acts as a fully decentralized controller.

\begin{figure}[ht]
\begin{center}
\includegraphics[width=0.5\textwidth]{./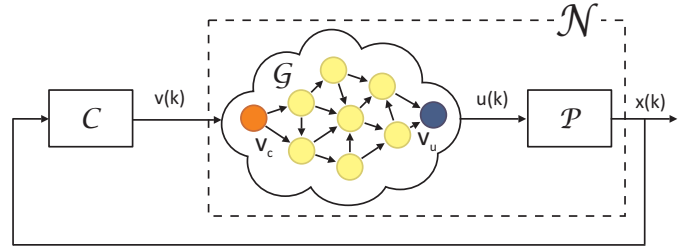}
\caption{State feedback control scheme of a multi-hop control network.}\label{frame}
\end{center}
\end{figure}
In this paper we focus on the effect of routing, and consider a Multi-hop network $\mathcal G$ that provides the interconnection between a state-feedback discrete-time controller $\mathcal C$ and a discrete-time linear plant $\mathcal P$. The network $\mathcal G$ consists of an acyclic graph where the node $v_c$ is directly interconnected to the controller, and the node $v_u$ is directly interconnected to the actuator of the plant. As classically done in multi-hop (wireless) networks to improve robustness of the system with respect to node failures we exploit redundancy of routing paths, namely we assume that the number of paths that interconnect $\mathcal C$ to $\mathcal P$ is greater than one, and that each path is characterized by a delay in forwarding the data. For any actuation data sent from the controller to the plant, a unique routing path of nodes is exploited: since the choice of the routing path usually depends on the internal status of the network, i.e. because of node and/or link failures, we assume that the routing path is non-deterministically chosen. For the above reason, each input signal will be non-deterministically delayed of a finite number of time steps, according to the chosen routing path.

{Our model is strictly related to discrete-time systems with time-varying delay (see e.g. \cite{LiuCTA2006}, \cite{HetelCDC2007} and \cite{ShaoTAC2011}). However the existing results cannot be applied to our model because, due to the routing protocol, control commands generated at different time instants can reach the actuator simultaneously, their arrival time can be inverted, and it is even possible that at certain time instant no control commands arrive to the actuator: these aspects generate switching dynamics that cannot be modeled as a time-varying delay. On the other hand, sufficient stability conditions and LMIs based design procedures have been developed for discrete-time switching systems with time-varying delays (see e.g. \cite{HetelTAC2006} and \cite{ZhangCTA2008}), which are much more general than our model and do not take into account the specific switching structure induced by the routing protocol. In summary, the specific delayed dynamics introduced by the routing in the network cannot be modeled by a generic time-varying delay system: we prove instead that it can be modeled by a pure switching system, where the switching matrices assume a particular form. In this paper, we exploit the characteristic system structure obtained by our network model to derive tailored results that hold for our switching model. We will address the stability and design problem for two families of controllers.}

The first family will be called \emph{delay-dependent controller}, where we assume that the controller is aware of the routing path (namely it can measure the switching signal) and has memory of a finite number of its previous actuation outputs. We prove that the stability analysis problem is NP-hard. Nevertheless, we show that it is possible to compute the worst case growth of the system with arbitrary precision. We prove that for scalar systems the design problem can be solved in a closed form, and provide a counterexample where, even for a 2-dimensional state with 1-dimensional input, there exists a class of switching signals that make the system uncontrollable.

The second family will be called \emph{delay-independent controller}, where we assume that the controller ignores the routing path (namely it ignores the switching signal) and has memory of a finite number of its previous actuation outputs. Differently from the \emph{delay-dependent controller}, in this case it is not trivial to model the closed loop system as a pure switching system. We first prove that this is always achievable, at the cost of augmenting the state space. Thanks to this result, as in the previous case, we show that it is possible to compute the worst case growth of the system with arbitrary precision. We finally provide an example of a scalar system where, depending on the dynamics of the plant, the system can be not stabilizable, stabilizable with memory, and stabilizable without memory. This motivates further studies to develop design methods to guarantee the existence of a stabilizing controller and efficient algorithms to compute it.

From the network point of view, the distinction between the above models depends on the protocol used to route data (see \cite{YangWiMesh2011} and references therein for an overview on routing protocols for wireless multi-hop networks). If the controller node $v_c$ of $\mathcal G$ is allowed by the protocol to chose a priori the routing path (e.g. source routing protocols), then we can assume that the controller is aware of the routing path and the associated delay, and therefore is also aware of the switching signal. If instead the protocol allows each communication node to choose the next destination node according to the local neighboring network status information (e.g. hop-by-hop routing protocols, such as WirelessHART), then we can NOT assume that the controller is aware of the routing path, and therefore also of the switching signal.

The paper is organized as follows. In Section \ref{secProbForm} we provide the problem formulation, and define the two families of controllers introduced above. In Sections \ref{secDelayDependent} and \ref{secDelayIndependent} we address respectively the \emph{delay-dependent} and the \emph{delay-independent} cases. In Section \ref{secConclusions} we provide concluding remarks and open problems for future research.


\section{Problem formulation}\label{secProbForm}
%

In this paper we will address the problem of stabilizing a discrete-time LTI system of the form
$$
x(t+1) = A_P x(t) + B_P u(t),\quad y(t) = x(t),\quad t \geq 0,
$$
with $A_P \in \mathbb R^{n \times n}$ and $B_P \in \mathbb R^{n \times m}$, using a \emph{switching linear controller}\footnote{We will clarify in the following what we mean for \emph{switching linear controller.}} $\mathcal C$. We assume that the control signal $v(t)$ generated by $\mathcal C$ is transmitted to the actuator of the plant $\mathcal P$ via a multi-hop control network \cite{AlurTAC11}. The network $\mathcal G$ consists of an acyclic graph $(V,E)$, where the node $v_c \in V$ is directly interconnected to the controller $\mathcal C$, and the node $v_u$ is directly interconnected to the actuator of the plant $\mathcal P$. In order to transmit each actuation data $v(t)$ to the plant, at each time step $t$ a unique path of nodes that starts from $v_c$ and terminates in $v_u$ is exploited. As classically done in multi-hop (wireless) networks to improve robustness of the system with respect to node failures, we exploit redundancy of routing paths, therefore the number of paths that can be used to reach $v_u$ from $v_c$ is assumed to be greater than one. To each path a different delay can be associated in transmitting data from $v_c$ to $v_u$, depending on the transmission scheduling and on the number of hops to reach the actuator. Since the choice of the routing path usually depends on the internal status of the network (e.g. because of node and/or link failures, bandwidth constraints, security issues, etc.), we assume that the routing path is non-deterministically chosen at each time step $t$. Therefore, the signal $u(t)$ will be non-deterministically delayed of a finite number of time steps, according to the chosen routing path. For the reasons above, we can model the dynamics of a multi-hop control network as follows:

\begin{definition}\label{def-general-sdsystem}
The dynamics of the interconnected system $N$ can be modeled as follows:
\begin{equation}\label{sdsystem}
x(t+1) = A x(t) + B u(v(t - d_{max}:t),\sigma(t - d_{max}:t)),
\end{equation}
where $v(t - d_{max}:t), \sigma(t - d_{max}:t))$ represent the latest $d_{max}$ values of $v(\cdot),\sigma(\cdot).$
\end{definition}
In the above equation,  $\sigma(t)\in D: \,t \geq 0$ is a non-deterministic switching signal, and $A,B$ model the interconnected switching dynamics of the controllability network $\mathcal G_{\mathcal R}$ and of the plant $\mathcal P.$ The set $D=\{d_1,\dots,d_{|D|}\}:\, d_i\in \n, d_{max}=\max{(D)}$ is the set of possible delays introduced by all routing paths.


The above model is quite general, and allows representing a wide range of routing communication protocols for (wireless) multi-hop networks \cite{YangWiMesh2011}. We will prove in the following sections that the model above can be cast as a pure switching system. Therefore, all tools developed for general switching systems can be used for stability analysis and controller design  (e.g. \cite{jungers_lncis,sun-ge}).  However, the particular delay model that we are considering makes our system a special case of general switching systems, endowed with a characteristic matrix structure. In this paper we aim at exploiting such special structure to derive tailored results that hold for our specific switching model. We will show that, already in one of the simplest framework that one could imagine, nontrivial situations and challenging problems occur. As discussed above we think that such results, even if they cannot be applied to general switching systems, have an important impact on analysis and design problems arising in multi-hop control networks.

As discussed in Section \ref{secIntro}, our switching model is a non-trivial generalization of classical LTI systems where the control signal can be subject to a variable delay before arriving to the plant. As we will see below, there are favorable situations where one can design a controller, just like in the LTI case, which stabilizes the system, no matter what delay actually occurs.\\
We emphasize that several variations of this model are possible.  For instance, in our setting, it could happen, during the run of a \switcheddelay, that at some particular time, no feedback signal comes back to the plant. We assume that in this situation the actuation input to the plant is set to zero. In many practical situations, it is also possible to implement a hold that would keep the previous input signal and resend it to the plant at the particular times for which the switching signal implies that no $v(t)$ from the controller is conveyed towards the plant. We defer the comparison of such variants for further studies.

It remains to clarify what we meant by \emph{switching linear controller} in the control scheme definition. In fact, we have to specify the input that the controller receives in order to compute its output. As we will see, there is not a unique straightforward generalization of classical LTI systems, and we make here an important distinction between the situation where the controller knows at each time step $t$ the switching signal $\sigma(t)$ and the situation where it does not. As illustrated in Section \ref{secIntro}, this distinction depends on the protocol used to route data in the network. If the controller node selects a priori the whole routing path up to the actuator node, then we assume that the controller is aware of the switching signal $\sigma(t)$ for each time step $t$. If instead each communication node locally selects the next destination node, then we assume that the controller ignores the switching signal.

For classical (i.e. non-switched) feedback systems with fixed delay, it is well known that the system can be neither controllable nor stabilizable if the feedback only depends on $x(t)$ (that is, if the controller does not have a memory of its previous outputs $v(t'),\ t-d_{max}<t'<t-1$).  Therefore, we take it for granted here that the controller has a memory of its past $d_{max}$ outputs.

\begin{definition}\label{def-dep}\label{eq-ut} A \emph{linear delay-dependent} controller is defined for $\sigma(t)=d$ as:
\begin{equation}\label{eq-vt-dep} v(t)= K(d) \tilde u(t),\end{equation} where  $\tilde u(t) = (x(t),u_{1}(t),u_{2}(t),\dots, u_{d_{max}}(t)),$ \begin{equation}\label{eq-ud} u_{d}(t)=\sum_{t'<t: t'+\sigma(t')=t+d} v(t'),\end{equation} and $K(d)\in \re^{m\times (md_{max}+n)}$. \end{definition}

\begin{definition}\label{def-indep} A \emph{linear delay-independent} controller is defined as:
\begin{equation}\label{eq-vt}  v(t)= K \tilde v(t),\end{equation} where \begin{equation}\label{eq-def-vtilde}\tilde v(t) = (x(t), v(t-d_{max}),\dots,v(t-1))),\end{equation} and $K\in \re^{m\times (md_{max}+n)}$.\end{definition}
The asymmetry between Definitions \ref{def-dep} and \ref{def-indep} deserves an explanation: in Definition \ref{def-dep}, it is assumed that the controller knows the previous values of the switching signal, and then can reconstruct $u_d(t)$ by applying equation (\ref{eq-ud}). In this situation the controller, in order to compute $v(t)$, can use the values of $u_d(t)$, which is the sum of all past control commands $v(t'), t'<t$ that will reach the actuator after $d$ time steps. In Definition \ref{def-indep} however, we suppose that the only available information for the controller is $x(t),$ and this explains why the only variables it can use are $x(t)$ and the $d_{max}$ past control commands $v(t'), t-d_{max} \leq t' \leq t-1$.

\begin{definition}We say that a system is \emph{stable} if, for any switching signal $\sigma(t)$ and for any initial condition $\tilde v(0),$ $$ \lim_{t\rightarrow \infty}\tilde v(t)=0, $$ where $\tilde v(t)$ is as defined in (\ref{eq-def-vtilde}), {and represents an extended state space containing both the state of the plant and the past $d_{max}$ outputs of the controller.}  \end{definition}


\section{Delay-dependent feedback}\label{secDelayDependent}

In this section we show that it may be hard or impossible to analyze stability of a \switcheddelay. We first show how to model this problem as a switching system stability analysis, and then prove that the problem is NP-hard. We then show examples for which such a controller can be designed, and other for which no controller exists.


\subsection{Modeling}

Since the controller is aware of the delays in the delay-dependent framework, we are allowed to use the variables $u_s(t)$ in the state-space, and it is then easy to write the corresponding closed loop system {from Definitions \ref{def-general-sdsystem} and \ref{def-dep}.}  We obtain the following equations: \begin{proposition}\label{prop-delay-dependent-cl}A \switcheddelay{} and delay-dependent controller can be modeled as a switching system with arbitrary switching signal as follows:
 \begin{equation}\label{switched-closed-loop} \tilde u({t+1})= M_{\sigma(t)} \tilde u(t),\quad M_{\sigma(t)} \in \Sigma, \end{equation} where \begin{equation}\label{eq-sigma}\Sigma=\left\{\begin{pmatrix} A&B&0&\dots & 0\\ 0&0&I&\dots & 0\\0&0&0&\ddots&0\\0&0&0&\dots &I\\0&0&0&\dots &0 \end{pmatrix}+ E(d) K(d):d\in D\right \},\end{equation}
and $E(d)$ is the block-column matrix with only zero blocks, except block $(d+1)$ which is equal to the identity if $d\neq0$ and to $B$ if $d=0$. {More precisely, for a same switching signal, the dynamics of \eqref{sdsystem} and \eqref{switched-closed-loop} are equal.}
\end{proposition}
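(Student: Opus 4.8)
The plan is to verify \eqref{switched-closed-loop}--\eqref{eq-sigma} by fixing an arbitrary admissible value $\sigma(t)=d\in D$ and computing each block row of the map $\tilde u(t)\mapsto \tilde u(t+1)$ directly from Definitions~\ref{def-general-sdsystem} and~\ref{def-dep}. Since $\sigma(t)$ ranges freely over $D$ and enters the dynamics only through the single index $d$, this produces exactly the finite family $\Sigma=\{M_d:d\in D\}$ and shows the closed loop is a switching system with unconstrained switching; the claimed per-trajectory equality of \eqref{sdsystem} and \eqref{switched-closed-loop} then follows step by step, for any fixed switching sequence, by induction on $t$. The substantive work is to identify the three structural pieces of $M_d$: the plant row $(A,B,0,\dots,0)$, the shift (super-diagonal identities) acting on the buffer variables $u_1,\dots,u_{d_{max}}$, and the rank-one correction $E(d)K(d)$ carrying the freshly generated command.

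It is convenient to read each $u_d(t)$ defined in \eqref{eq-ud} as the aggregate of all control commands already in flight whose \emph{remaining} delay equals $d$, i.e.\ which are scheduled to reach the actuator $d$ steps after $t$. In these terms the core computation is an ageing argument: advancing the clock by one step decreases every remaining delay by one, so the commands composing $u_{d+1}(t)$ become precisely those composing $u_d(t+1)$, while the command $v(t)$ generated at the current step is inserted into the unique buffer slot selected by $\sigma(t)$. Carrying out this reindexing on the sum in \eqref{eq-ud} yields, for the buffer block, a recursion of the form $u_d(t+1)=u_{d+1}(t)+(\text{injection of }v(t)\text{ when its delay matches slot }d)$, with the convention $u_{d_{max}+1}\equiv 0$; this is exactly the super-diagonal identity pattern of the first matrix in \eqref{eq-sigma} together with the identity blocks of $E(d)$.

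For the plant row I would compute $x(t+1)$ from \eqref{sdsystem}: the input actually applied to $\mathcal P$ at time $t$ is the sum of all commands arriving at time $t$, which splits into the contribution already buffered (giving the block $B$ multiplying the first buffer variable, hence the $(A,B,0,\dots,0)$ row) and, in the special case $d=0$ of zero delay, the command $v(t)$ itself applied instantaneously through $B$. This last term is precisely why $E(0)$ carries the block $B$ in its first position rather than an identity: a delay-$0$ command bypasses the buffer and hits the plant directly. Substituting the controller law \eqref{eq-vt-dep}, namely $v(t)=K(d)\tilde u(t)$, turns every injection into a linear term $E(d)K(d)\tilde u(t)$, and collecting the plant row, the shift, and this correction gives $\tilde u(t+1)=M_d\,\tilde u(t)$ with $M_d$ as in \eqref{eq-sigma}.

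The step I expect to be the main obstacle is the bookkeeping of indices: one must carefully align the slot index $d$ appearing in the defining sum \eqref{eq-ud} with the block position $(d+1)$ used in $E(d)$ and in \eqref{eq-sigma}, and treat the two boundary cases separately, namely the last buffer $u_{d_{max}}$, which has no predecessor and so receives only the injection (matching the zero final block row), and the zero-delay case $d=0$, which routes $v(t)$ to the plant through $B$ rather than into a buffer. Once these conventions are fixed consistently, each block identity reduces to a routine reindexing of a finite sum, and the equality of the two models is immediate.
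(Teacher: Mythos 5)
Your proposal is correct and follows exactly the direct block-by-block verification that the paper has in mind: the paper itself omits the proof as ``obvious,'' but the ageing recursion you derive for the buffer variables is precisely Equation~\eqref{eq-us}, which the paper states and uses later in the proof of its Theorem~2, and your treatment of the plant row and of the $d=0$ case matches the definition of $E(d)$. The index bookkeeping you flag is indeed the only delicate point (the paper's own Equation~\eqref{eq-ud} is arguably off by one relative to \eqref{eq-us} and to the placement of the identity in block $d+1$), and your resolution is the intended one.
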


We skip the proof, which is obvious for the delay-dependent case.  As we will see below, the reduction is less obvious for the delay-independent case, and necessitates an increase of the dimension of the state space.


\subsection{Analysis}

As illustrated above, a \switcheddelay{} can be put in the well studied framework of linear switching systems with arbitrary switching signal. Even though these systems have been at the center of a huge research effort in the last decades \cite{sun-ge,LeeD07,jungers_lncis,valcher-positive}, they are known to be very difficult to handle. Nevertheless, it follows from Proposition \ref{prop-delay-dependent-cl} that one can check the stability of a given \switcheddelay{} with arbitrary precision:
\begin{corollary}\label{cor-approx-dep}
For any $\epsilon,$ there exists an algorithm which, given a \switcheddelay{} and delay-dependent feedback, computes in finite time the worst rate of growth of the system, up to an error of $\epsilon.$  More precisely, for any real $r>0$ the algorithm decides in finite time whether
\begin{itemize}
\item $\exists K \in \re : \forall \sigma, \forall t,\ |\tilde u(t)|\leq K (r+\epsilon)^t$;
\item $\exists K \in \re, K>0, \exists \sigma:\ |\tilde u(t)|\geq K (r-\epsilon)^t.$
\end{itemize}
\end{corollary}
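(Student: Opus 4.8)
The plan is to reduce Corollary \ref{cor-approx-dep} to the problem of approximating the \emph{joint spectral radius} (JSR) of the finite matrix set $\Sigma$ defined in Proposition \ref{prop-delay-dependent-cl}. By that proposition, for a fixed controller the closed loop dynamics \eqref{switched-closed-loop} is exactly a linear switching system $\tilde u(t+1)=M_{\sigma(t)}\tilde u(t)$ with arbitrary switching over the finite set $\Sigma=\{M_{d}:d\in D\}$. The worst-case rate of growth of such a system over all switching signals is, by definition, governed by $\jsr(\Sigma)=\lim_{k\to\infty}\max_{d_1,\dots,d_k\in D}\|M_{d_k}\cdots M_{d_1}\|^{1/k}$, a quantity independent of the submultiplicative norm chosen. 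So the first step is simply to observe that the two itemized statements are precisely the characterization of $\jsr(\Sigma)$: setting $\rho=\jsr(\Sigma)$, one has an upper bound $|\tilde u(t)|\le K(r+\epsilon)^t$ for all signals whenever $r+\epsilon>\rho$, and conversely there is a signal realizing growth at least $(r-\epsilon)^t$ whenever $r-\epsilon<\rho$.

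The second step is to invoke the known approximability of the JSR. The key fact is that for any finite set of $N$ matrices of dimension $q=md_{max}+n$, the elementary bounds
\begin{equation}\label{eq-jsr-bounds}
\max_{d_1,\dots,d_k}\|M_{d_k}\cdots M_{d_1}\|^{1/k}\ge \jsr(\Sigma)\ge q^{-1/k}\max_{d_1,\dots,d_k}\|M_{d_k}\cdots M_{d_1}\|^{1/k}
\end{equation}
hold for every finite $k$, where the factor $q^{-1/k}$ comes from the equivalence of norms on $\re^{q\times q}$ (for instance comparing an operator norm with the spectral radius via John's ellipsoid / norm-equivalence constants). The right-hand side of \eqref{eq-jsr-bounds} converges to $\jsr(\Sigma)$ as $k\to\infty$, and the gap between the two computable bounds is a factor $q^{1/k}$, which tends to $1$. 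Hence, given $\epsilon$, I would choose $k$ large enough that $q^{1/k}\jsr(\Sigma)-\jsr(\Sigma)<\epsilon$ (crudely, $k>\tfrac{\log q}{\log(1+\epsilon/r_{\max})}$ for an a priori upper estimate $r_{\max}$ on $\jsr$, obtainable from a single norm $\max_d\|M_d\|$), compute the finitely many products of length $k$, and thereby bracket $\jsr(\Sigma)$ inside an interval of width less than $\epsilon$. Comparing $r$ against this interval then decides both items in finite time.

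The main obstacle, and the only genuinely nontrivial point, is not the algorithm itself but justifying that the approximation scheme terminates with a \emph{provable} $\epsilon$-guarantee: the bound \eqref{eq-jsr-bounds} is effective, but turning "the gap shrinks to zero" into a halting criterion requires either an a priori bound on $\jsr$ (immediate from $\jsr\le\max_d\|M_d\|$) or a stopping test that brackets $r$ strictly away from the JSR interval. I would therefore phrase the algorithm as: compute the products of length $k=1,2,\dots$, maintaining the lower bound $L_k=q^{-1/k}\max\|M_{d_k}\cdots M_{d_1}\|^{1/k}$ and the upper bound $U_k=\max\|M_{d_k}\cdots M_{d_1}\|^{1/k}$; since $L_k\le\jsr(\Sigma)\le U_k$ and $U_k/L_k=q^{1/k}\to1$, for each target $r$ either $r+\epsilon>U_k$ (certifying the first item) or $r-\epsilon<L_k$ (certifying the second) must hold once $k$ is large enough that $U_k-L_k<\epsilon$, which happens after finitely many steps. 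I would cite the standard JSR approximation results (e.g.\ \cite{jungers_lncis}) for the norm-equivalence bound \eqref{eq-jsr-bounds} rather than re-deriving it, so the remaining work is the short deduction that the two items are equivalent to deciding the position of $r$ relative to $\jsr(\Sigma)$.
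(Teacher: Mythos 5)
Your first step matches the paper exactly: Proposition \ref{prop-delay-dependent-cl} turns the closed loop into an unconstrained linear switching system over the finite set $\Sigma$, the worst-case growth rate is the joint spectral radius $\jsr(\Sigma)$, and the two items amount to locating $r$ relative to $\jsr(\Sigma)$ up to $\epsilon$. The paper's proof stops there and invokes \cite[Corollary 3.1]{protasov-jungers-blondel09} or \cite[Theorem 6.1]{ajprhscc11} as a black box for the arbitrary-accuracy approximation. The gap is in your attempt to supply that approximation yourself: the lower bound
$$\jsr(\Sigma)\ \ge\ q^{-1/k}\,\max_{d_1,\dots,d_k}\bigl\|M_{d_k}\cdots M_{d_1}\bigr\|^{1/k}$$
is not a theorem, and it is false. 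A single nilpotent matrix already refutes it: for $\Sigma=\{A\}$ with $A=\left(\begin{smallmatrix}0&1\\0&0\end{smallmatrix}\right)$, $q=2$ and $k=1$, one has $\jsr(\Sigma)=\rho(A)=0$ while $q^{-1}\|A\|=1/2$. More generally no inequality $\jsr\ge c_{q,k}\max\|M_{d_k}\cdots M_{d_1}\|^{1/k}$ with $c_{q,k}>0$ in a fixed norm can hold for $k<q$, and the matrices in \eqref{eq-sigma} are precisely of the shift-plus-low-rank type for which nilpotent products readily occur. Consequently your $L_k$ is not a valid lower bound on $\jsr(\Sigma)$, the bracket $L_k\le\jsr(\Sigma)\le U_k$ with $U_k/L_k\to1$ collapses, and your termination argument is unjustified.

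The factor you have in mind does appear in valid schemes, but as $q^{-1/(2k)}$ multiplying the \emph{optimized} ellipsoidal-norm value $\min_P\max_w\|M_w\|_P$ (an SDP value), not the product-norm maximum in a fixed norm; it is exactly the optimization over $P$ that disposes of the nilpotent counterexample, and this is in essence what the cited results of \cite{protasov-jungers-blondel09} and \cite{ajprhscc11} provide. If you insist on an elementary self-contained argument, replace $L_k$ by the classical lower bound $\check\rho_k=\max_{d_1,\dots,d_k}\rho(M_{d_k}\cdots M_{d_1})^{1/k}$, which always satisfies $\check\rho_k\le\jsr(\Sigma)$ and whose supremum over $k$ equals $\jsr(\Sigma)$ by the Berger--Wang theorem; the bracket $\bigl[\max_{j\le k}\check\rho_j,\ \min_{j\le k}\hat\rho_j\bigr]$ then shrinks to the point $\jsr(\Sigma)$ and yields an algorithm that terminates for every $\epsilon$ (with no a priori running-time bound, which is all the corollary claims). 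Otherwise, do what the paper does and cite the approximation results directly.
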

\begin{proof}
Proposition \ref{prop-delay-dependent-cl} reformulates the \switcheddelay{} as a classical switching system, thus it is possible to apply one of the classical stability decision procedures derived in \cite[Corollary 3.1]{protasov-jungers-blondel09} or \cite[Theorem 6.1]{ajprhscc11}.
\end{proof}

The above corollary provides a tool to approximate the worst rate of growth, by bisection on $r$. Thus, it is possible to decide with an arbitrary precision whether a \switcheddelay{} and delay-dependent feedback is stable. However, no polynomial-time algorithm is known to solve this problem, and hence the proposed solution does not work in polynomial time.  This is not surprising in view of our next result: we show that given a system and its controller, it is NP-hard to decide whether the controller asymptotically stabilizes the system.
\begin{theorem}
Unless $P=NP,$ there is no polynomial-time algorithm that, given a linear system $A,$ its control matrices $K_i,$ and a set of delays $D,$ decides whether the corresponding delay-dependent \switcheddelay{} is stable.

Also, the question of whether the system remains bounded is turing-undecidable.
 This is true even if the matrices have nonnegative rational entries, and the set of delays is $\{0,1\}.$
\end{theorem}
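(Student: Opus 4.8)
First I would specialize Proposition~\ref{prop-delay-dependent-cl} to the delay set $D=\{0,1\}$, so that $d_{max}=1$ and the closed loop becomes a \emph{two}-matrix switching system $\tilde u(t+1)=M_{\sigma(t)}\tilde u(t)$, with $\Sigma=\{M_0,M_1\}$ given by \eqref{eq-sigma}. Asymptotic stability for all switching signals is then exactly the condition $\jsr(\{M_0,M_1\})<1$, and the system remains bounded precisely when the set of all products of $M_0$ and $M_1$ is bounded. Thus both assertions reduce to the two classical decision problems for a pair of matrices: deciding whether the joint spectral radius is below $1$, and deciding whether all products stay bounded. The strategy is to reduce the known hard versions of these problems, for an \emph{arbitrary} pair of nonnegative rational matrices $P_0,P_1\in\re^{n\times n}$, to our structured pair. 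For the first I would invoke the NP-hardness of deciding $\jsr<1$ for such pairs, and for the second the Blondel--Tsitsiklis theorem that boundedness of all products of a pair is Turing-undecidable; both source instances can be taken with nonnegative rational entries, which is what yields the last sentence of the statement.

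The heart of the argument is a single realization lemma feeding both reductions. Given $P_0,P_1$, I would take plant dimension $n$, input dimension $m=n$, and set $A=0$, $B=I$, with delay-dependent gains $K(0)=(P_0,\,0)$ and $K(1)=(P_1,\,0)$ (i.e.\ $K_x(0)=P_0,\,K_u(0)=0$ and $K_x(1)=P_1,\,K_u(1)=0$); all of these data are nonnegative rational. Substituting into \eqref{eq-sigma} gives $M_0=\left(\begin{smallmatrix}P_0&I\\0&0\end{smallmatrix}\right)$ and $M_1=\left(\begin{smallmatrix}0&I\\P_1&0\end{smallmatrix}\right)$. The key observation is that on the subspace $S=\{(x,0)\}$ one has $M_0(x,0)=(P_0x,0)$, $M_1(x,0)=(0,P_1x)$, and hence $M_0M_1(x,0)=(P_1x,0)$: so $M_0$ acts as $P_0$ and the word $M_0M_1$ acts as $P_1$ on $S$, and every product of $\{P_0,P_1\}$ is realized on $S$ by a word over the letters $\{M_0,\,M_0M_1\}$. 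Crucially this encoding is subtraction-free, so nonnegativity of $P_0,P_1$ is preserved throughout.

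It then remains to transport the two spectral properties across this encoding. For the joint spectral radius, the lower bound is easy: any product of $\{P_0,P_1\}$ of large norm occurs as a submatrix of an $M$-word of at most twice the length, so $\jsr(\{M_0,M_1\})\ge \jsr(\{P_0,P_1\})^{1/2}$. For the matching upper bound I would bound an arbitrary $M$-word by exploiting $M_1^2=P_1\oplus P_1$ together with the action on $S$, to obtain $\jsr(\{M_0,M_1\})\le \jsr(\{P_0,P_1\})^{1/2}$. The square root is harmless, since $\rho^{1/2}<1\Leftrightarrow\rho<1$; hence stability of the structured pair is equivalent to $\jsr(\{P_0,P_1\})<1$, and the NP-hardness transfers. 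For the boundedness statement I would argue directly at the level of products: boundedness of all $\{P_0,P_1\}$-products is equivalent to boundedness of all $\{M_0,M_1\}$-products, again using the $S$-return structure and the identity $M_1^2=P_1\oplus P_1$ to handle all-$M_1$ and mixed words.

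The main obstacle is precisely this two-sided estimate, and in particular the boundedness equivalence in the critical regime $\jsr=1$, where the undecidability lives and where no slack is available. One must carefully control $M$-words containing isolated or leading occurrences of $M_1$ (these leave $S$ and are governed by the ``square-root'' factor $M_1^2=P_1\oplus P_1$ rather than by the clean return map). I would emphasize why the obvious block-triangular embedding does \emph{not} work and so forces this more delicate route: by \eqref{eq-sigma} the top block-row of $M_1$ is pinned to $(A,B)$ while $M_0$ can only \emph{add} nonnegative feedback to it, so the two top-left blocks are necessarily comparable ($M_0^{11}=M_1^{11}+BK_x(0)\ge M_1^{11}$) and cannot host an arbitrary, incomparable pair $\{P_0,P_1\}$ without introducing negative gains. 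The return-map/word encoding above is exactly what circumvents this ordering constraint while keeping all entries nonnegative and rational.
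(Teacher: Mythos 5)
Your overall strategy coincides with the paper's: reduce from the two classical problems for a pair of nonnegative rational matrices (NP-hardness of semigroup stability, Turing-undecidability of semigroup boundedness), realized through Proposition~\ref{prop-delay-dependent-cl} with $A=0$, $B=I$, $D=\{0,1\}$. The encoding, however, is genuinely different. The paper uses the \emph{same} gain $K_0=K_1=(A_1\ A_2)$ for both delays, so that the delay only selects which block row of the closed-loop matrix receives the feedback; the resulting pair is $\left\{\left(\begin{smallmatrix}A_1&A_2\\0&0\end{smallmatrix}\right),\left(\begin{smallmatrix}0&0\\A_1&A_2\end{smallmatrix}\right)\right\}$, and every length-$T$ product of these matrices has as its blocks length-$T$ products of $\{A_1,A_2\}$ (and conversely). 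This length-preserving simulation makes both the stability and the boundedness equivalences immediate, with no spectral-radius distortion. Your encoding instead puts $P_0$ and $P_1$ into two \emph{different} delay-dependent gains and recovers arbitrary $P$-products via the return map on $S=\{(x,0)\}$ using the words $M_0$ and $M_0M_1$; this dilates time by a factor between $1$ and $2$ and forces the ``square-root'' bookkeeping that you correctly identify as the main obstacle. Your route can be completed: one can check that every block of every product of your $M_0,M_1$ is a \emph{single} product of $P$'s (of length between roughly half and all of the word length), never a sum, so boundedness and the dichotomy $\jsr<1$ versus $\jsr\geq1$ transfer in both directions even in the critical regime. The paper's trick simply avoids having to prove any of this.

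One intermediate claim in your plan is false as stated: the upper bound $\jsr(\{M_0,M_1\})\le \jsr(\{P_0,P_1\})^{1/2}$ fails whenever $\jsr(\{P_0,P_1\})>1$. For instance with $P_1=0$ and $\rho(P_0)=4$ one has $\jsr(\{M_0,M_1\})\ge\rho(M_0)=\rho(P_0)=4>2$, because the all-$M_0$ word simulates $P_0^k$ in only $k$ steps. The correct statement is $\jsr(\{M_0,M_1\})\le\max\bigl(\jsr(\{P_0,P_1\}),\jsr(\{P_0,P_1\})^{1/2}\bigr)$, reflecting the variable time dilation. This does not damage the reduction---the equivalences ``$<1$ iff $<1$'' and ``bounded iff bounded'' survive---but the clean two-sided square-root identity you aim for is not available, and the final write-up would have to argue with the interval of exponents $[1/2,1]$ rather than with a single exponent.
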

\begin{proof}
Our proof works by reduction from the matrix semigroup stability, and the matrix semigroup boundedness, which are well known to be respectively NP-hard and Turing undecidable \cite[Theorem 2.4 and Theorem 2.6]{jungers_lncis}. In this problem, one is given a set of two matrices $\Sigma=\{A_1,A_2\}\subset \cq_+^{n\times n}$ ($\cq_+$ is the set of nonnegative rational numbers) and one is asked whether for any sequence $(i_t)_0^\infty,\ i_t\in [1,2],$ the corresponding product $A_{i_1}A_{i_2}\dots A_{i_T}$ converges to the zero matrix when $T\rightarrow \infty$ (respectively remains bounded when $T\rightarrow \infty$).

Let us consider a particular instance $\Sigma=\{A_1,A_2\}\in \cq_+^{n\times n}$ of the matrix semigroup stability (resp. boundedness) problem. We build a delay-dependent \switcheddelay{} whose closed loop system can be written as follows: \begin{eqnarray}\tilde u({t+1})&=&M_i \tilde u(t),\quad M_i\in \Sigma', \end{eqnarray} where  $\Sigma'$ is a set of $2n \times 2n$ matrices. The set $\Sigma'$ is product-bounded (resp. stable) if and only if $\Sigma$ is.

Our construction is as follows: we set $D=\{0,1\}$ as the set of delays, and we build a \switcheddelay{} with a plant of dimension $n$ as follows: the system matrix is given by $$ A=0,\, B=I,$$ and the feedback matrix (in block form) by $$K_d= \begin{pmatrix} A_1 &A_2 \end{pmatrix} $$ for $d=0,1.$  Thus, the corresponding closed loop feedback switching system can be expressed from Proposition \ref{prop-delay-dependent-cl} as $$ \tilde u({t+1})= M_i \tilde u(t)\quad M_i \in \Sigma', $$
where \begin{equation}\label{eq-np-switching}\Sigma'= \left\{ \begin{pmatrix} A_1 &A_2 \\0&0\end{pmatrix},\begin{pmatrix}0&0\\ A_1 &A_2 \end{pmatrix}\right\}.\end{equation} Writing $\tilde u(t) = (x(t), u_1(t))$ we have that, depending on $\sigma(t)$, either $$\tilde u({t+1})= (A_1x(t)+A_2u_1(t),0)$$ or $$\tilde u({t+1})= (0,A_1x(t)+A_2u_1(t)).$$ It is straightforward to see from Equation (\ref{eq-np-switching}) that the set $\Sigma'$ is stable (resp. product bounded) if and only if $\Sigma$ is. Indeed, the blocks in the products of matrices in $\Sigma'$ are arbitrary products of matrices in $\Sigma.$  This concludes the proof.
\end{proof}

\begin{remark}
It is not known (to the best of our knowledge) whether the matrix semigroup stability problem is Turing decidable (say, for matrices with rational entries). Thus, the above proof does not allow us to conclude that the \switcheddelay{} stability problem is undecidable. This is why we only claim that the stability problem is NP-hard, while the boundedness problem is provably Turing undecidable.
\end{remark}


\subsection{Design}

We complete this section by addressing the \emph{design question:} given a \switcheddelay{}, find the actual value of the delay-dependent controllers $K(d)$ such that the resulting system is stable by considering the particular structure exhibited in (\ref{eq-sigma}). As was said above, much less is known in the literature about the design of regular switching systems.

We first completely solve the linear delay-dependent controller design for one-dimensional systems ($n=m=1$). Of course, we have to assume that the system is controllable (i.e. $b\neq 0$), since it must already be the case for a fixed delay.
For classical LTI systems with fixed delay, in the case where the controller has a memory of its past $d_{max}$ outputs, a solution that drives the trajectory exactly onto the origin in finite time is given by an extension of the Ackermann formula: \begin{equation}\label{eq-ack}K_{ack,d}=(-a^{d+1}/b,-a^{d},-a^{d-1},\dots,-a).\end{equation}
It appears that for a \switcheddelay{} too, there is a solution that reaches exactly the origin in finite time $d_{max}$, where $d_{max}$ is the maximum delay:
\begin{theorem}
Consider System (\ref{sdsystem}) with $n=m=1$ and suppose $a,b\neq 0.$ Then, the system is controllable with the following delay-dependent feedback controller: $$K(d)=K_{ack,d_{max}}/(a^{d_{max}-d}),$$ where $K_{ack,d_{max}}$ is the controller as in equation \eqref{eq-ack} for a system with fixed-delay $d_{max}$. Moreover the system reaches exactly zero at latest at time $t=d_{max}+1$.\end{theorem}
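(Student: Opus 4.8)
The plan is to work directly with the scalar closed-loop recursion of Proposition~\ref{prop-delay-dependent-cl}. Specializing \eqref{eq-sigma} to $n=m=1$, $A=a$, $B=b$ and writing the extended state as $\tilde u(t)=(x(t),u_1(t),\dots,u_{d_{max}}(t))$, the matrices $M_{\sigma(t)}$ act componentwise as
\begin{equation}\label{plan-dyn}
\begin{aligned}
x(t+1)&=a\,x(t)+b\,u_1(t)+b\,v(t)\,\mathbf 1_{\{\sigma(t)=0\}},\\
u_j(t+1)&=u_{j+1}(t)+v(t)\,\mathbf 1_{\{\sigma(t)=j\}},\qquad j=1,\dots,d_{max},
\end{aligned}
\end{equation}
with the convention $u_{d_{max}+1}\equiv0$ and with $v(t)=K(\sigma(t))\tilde u(t)=a^{\sigma(t)-d_{max}}\,w(t)$, where I abbreviate $w(t):=K_{ack,d_{max}}\,\tilde u(t)$. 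The guiding observation is the dead-beat meaning of this functional: iterating \eqref{plan-dyn} with zero input shows that $-b\,w(t)$ equals the value $x$ would attain at the horizon $t+d_{max}+1$ under free evolution.

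The crux is to evaluate $w(t+1)=K_{ack,d_{max}}\tilde u(t+1)$ from \eqref{plan-dyn} and to verify that it vanishes for every admissible delay. A direct reindexing of the shifted entries $u_{j+1}(t)$ against the geometric weights of $K_{ack,d_{max}}$ shows that the shift part of the dynamics advances the functional to $a\,w(t)$. The injected command $v(t)$ contributes only in the slot $\sigma(t)=d$ it enters, reaching the prediction horizon with gain $b\,a^{d_{max}+1-d}$; since the controller scales it by $a^{d-d_{max}}$, the net gain is the delay-free $ab$ and its imprint on $w(t+1)$ is $-a\,w(t)$, \emph{independently of} $d$. Hence $w(t+1)=a\,w(t)-a\,w(t)=0$ for all $t$ and all switching values, so $w(t)=0$ and therefore $v(t)=0$ for every $t\ge1$.

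It remains to turn $w\equiv0$ into $\tilde u(d_{max}+1)=0$. Since $v(t)=0$ for $t\ge1$, from time $1$ the recursion \eqref{plan-dyn} reduces to the pure shift, so the initial pipeline and the single possibly-nonzero command $v(0)$ (whose delay is at most $d_{max}$) are all absorbed into $x$ no later than step $d_{max}+1$; consequently $u_1(d_{max}+1)=\dots=u_{d_{max}}(d_{max}+1)=0$. Feeding this empty pipeline into $w(d_{max}+1)=0$ leaves only $-a^{d_{max}+1}x(d_{max}+1)/b=0$, and since $a\neq0$ this forces $x(d_{max}+1)=0$, i.e. $\tilde u(d_{max}+1)=0$; this is exactly the controllability claim. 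I expect the one-step collapse $w(t+1)=0$ to be the only genuine computation, and the two delicate points to be (i) the \emph{delay-independence} of the injected imprint, which is precisely what the scaling $a^{\sigma(t)-d_{max}}$ is engineered to achieve and where the hypothesis $a,b\neq0$ is used, and (ii) the sharp terminal time $d_{max}+1$ (rather than $d_{max}+2$), which hinges on the pipeline being flushed together with $a\neq0$.
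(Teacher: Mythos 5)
Your proof is correct and takes essentially the same route as the paper: both arguments hinge on showing that the Ackermann functional $w(t)=K_{ack,d_{max}}\tilde u(t)$ is annihilated in one step regardless of the realized delay --- the paper multiplies its identity \eqref{eq-ackerman-zero} by $a$ and applies the shift equations \eqref{eq-us}, which is exactly your computation $w(t+1)=a\,w(t)-a\,w(t)=0$ --- after which $v$ vanishes from time $1$ on, the pipeline flushes by the pure shift, and $a,b\neq 0$ forces $x(d_{max}+1)=0$. No gaps.
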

\begin{proof}
Let $\tilde u(t) = (x(t),u_{1}(t),u_{2}(t),\dots, u_{d_{max}}(t))$ be the state of the system. From Equation (\ref{eq-ack}) we have, at any time $t,$
\begin{eqnarray}\label{eq-ackerman-zero} \nonumber(a^{d_{max}+1}/b)x(t) + \sum_{s=1}^{d_{max}} a^{d_{max}-s+1}u_{s}(t)&&\\ +K(\sigma(t))\tilde u(t)(a^{d_{max}-{\sigma(t)}})&=&0. \end{eqnarray}
By Definitions \ref{def-dep} and \ref{sdsystem} it follows that for $s=1,\dots,d_{max},$
\begin{eqnarray}\label{eq-us}
u_s(t+1)&=&u_{s+1}(t)\quad \mbox{if }\sigma(t)\neq s\\
\nonumber u_s(t+1)&=&u_{s+1}(t)+v(t)\quad \mbox{if }\sigma(t)= s,
\end{eqnarray}
where we fix for conciseness of notations that $u_{d_{max}+1}=0.$ Observe that the last term in the left-hand side of Equation (\ref{eq-ackerman-zero}) is equal to $v(t)(a^{d_{max}-{\sigma(t)}}).$ Multiplying that equation by $a,$ and making use of Equation \eqref{eq-us}, we obtain:
\begin{align*}
0 =& a^{d_{max}+1}/b(ax(t)+bu_1(t)+bz(t))\\
&+\sum_1^{d_{max}}{a^{d_{max}+1-s}u_s(t+1)}\\
=& (a^{d_{max}+1}/b)x({t+1})+\sum_1^{d_{max}}{a^{d_{max}+1-s}u_s(t+1)} \\
=&K_{ack,d_{max}}\tilde u(t+1)\\
=&v({t+1})(a^{d_{max}-{\sigma(t+1)}}),
\end{align*}
where, again for conciseness, we introduce the variable $z$ such that $z(t)=v(t)$ if $0 \in D$ and $\sigma(t)=0$, and $z(t)=0$ otherwise. In conclusion, if the controller is applied at time $1,$ the output of the controller at time $2$ is $v_{2}=0.$ Thus, by induction, $$\forall t'>1,\, v({t'})=0. $$  This implies (see Equation (\ref{eq-ud})) that $$\forall t''>d_{max},\forall s,\,u_s{(t'')}=0.$$ In turn, since $v_{d_{max+1}}= K_{ack,\sigma({d_{max}+1)}}\tilde u(d_{max}+1)=0,$ this implies that $x_{d_{max}+1}=0.$ \end{proof}
We now show that the situation becomes more complex as soon as the dimension of the plant is equal to $2.$
The following is an example of a system with $n=2,m=1$ that is stabilizable with fixed delays, but not with switching delays.
\begin{ex}
Consider a \switcheddelay{} with the following values:
$$A=\begin{pmatrix}0& 2\\2& 0\end{pmatrix},\quad B=\begin{pmatrix}0& 1\end{pmatrix}^T, $$ $$ D=\{0,1\},\quad \sigma(t)=t \mbox{ mod } 2.$$

That is, $\sigma= 0$ when $t$ is even, and $1$ when $t$ is odd. Then, if $x(0)=(1, 0),$ the system is not controllable. Indeed, one can show by induction that for any even time $t,$ $x_1(t)=2^t.$\end{ex}


\section{delay-independent feedback}\label{secDelayIndependent}

In this section we show that, when the controller does not depend on the delay, the situation is harder because one has to design a single controller that would work for any possible switching signal. Differently from the \emph{delay-dependent controller}, in this case it is not trivial to model the closed loop system as a pure switching system. We first prove that this is always achievable, at the cost of augmenting the state space. This implies, as in the delay-dependent case, that it is possible to compute the worst case growth with arbitrary precision. We finally provide an example of a scalar system where, according to the dynamics of the plant, the system can be not stabilizable, stabilizable with memory, and stabilizable without memory.


\subsection{Modeling}

The delay-independent controller has no access to the variables $u_s(t),$ since one needs the previous values of the switching signal in order to reconstruct these variables. The only variables that the controller can use are its previous outputs $v({t'}):t'<t.$
As done in Proposition \ref{prop-delay-dependent-cl}, the dynamics of the closed loop system can be written as follows:

\begin{equation}\label{switched-closed-loop-independent} \tilde v({t+1})= M(\sigma,t) \tilde v(t)\quad M(\sigma,t) \in \Sigma, \end{equation} where
\begin{eqnarray}\label{eq-sigma-cl-independent} \Sigma&=&\left\{\begin{pmatrix} A&0&0&\dots & 0\\ 0&0&I&\dots & 0\\0&0&0&\ddots&0\\K_0&K_1&K_2 &\dots&K_{d_{max}} \end{pmatrix} \right.  \\&&\left. +\sum_{t': t'+\sigma(t')=t}E(1) B E^T({d_{max}+1-\sigma(t')})\right \}.\nonumber\end{eqnarray}

However, there are two important differences between the systems defined by Equations \eqref{eq-sigma} (delay-dependent) and \eqref{eq-sigma-cl-independent} (delay-independent). First, the set $\Sigma$ in (\ref{eq-sigma-cl-independent}) has a number of matrices that can be exponential in the number of delays $|D|.$ Second, the closed-loop formulation (\ref{switched-closed-loop-independent}) is not a switching system with arbitrary switching signal, as the matrix $M(\sigma,t)$ depends not only on $\sigma(t),$ but also on the values $\sigma(t')$ for $t'<t,$ as one can check in (\ref{eq-sigma-cl-independent}). Thus, this setting seems harder to represent as a pure switching system because of this correlation in the succession of matrices. Even though recent methods based on LMI criteria have been proposed that offer a natural framework for analyzing switching signals described by a regular language (e.g. \cite{ajprhscc11,ajprhscc12,LeeD07}), it would be convenient to have a formulation of the closed loop switching system without any constraint on the switching signal. Indeed, more methods for analyzing switching systems have been designed in the general framework of unconstrained switching. In the following theorem we show that it is always possible to model system \eqref{eq-sigma-cl-independent} as a pure switching system, at the cost of augmenting the state space.
\begin{theorem}\label{thm-ssreduction-independent}
Any $n$-dimensional \switcheddelay{} and delay-independent control of dimension $m$ and set of delays $D$ can be represented as an switching system with arbitrary switches among $|D|$ matrices, characterized by a $(n+2d_{max}m)$-dimensional state space.
\end{theorem}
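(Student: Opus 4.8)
The plan is to confront the one genuine difficulty in the delay-independent formulation, namely that the matrix $M(\sigma,t)$ in Equation~(\ref{eq-sigma-cl-independent}) depends on the entire past of the switching signal and not just on $\sigma(t)$. The source of this memory is the term $\sum_{t': t'+\sigma(t')=t}E(1)BE^T(d_{max}+1-\sigma(t'))$: a control command $v(t')$ emitted in the past ``arrives'' at the plant at a time determined by $\sigma(t')$, so the current update needs to know which past commands are scheduled to land now. My first step would therefore be to identify exactly what bounded finite-memory information is needed to make the dynamics Markovian. Since each delay is at most $d_{max}$, a command emitted at time $t'$ can affect the plant only up to $d_{max}$ steps later; hence the relevant memory is a buffer of the pending control commands indexed by their scheduled arrival time, of which there are at most $d_{max}$ distinct future slots.

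The core idea is to introduce, in addition to the existing extended state $\tilde v(t)$, a set of auxiliary ``mailbox'' variables $w_1(t),\dots,w_{d_{max}}(t)\in\re^m$, where $w_j(t)$ accumulates the sum of all past commands $v(t')$ scheduled to reach the actuator exactly $j$ steps in the future, i.e.\ those with $t'+\sigma(t')=t+j$. First I would write down the update rule for this buffer: at each step the buffer shifts (the slot $j=1$ is consumed and delivered to the plant, everything shifts down by one), and the newly emitted command $v(t)=K\tilde v(t)$ is deposited into slot $\sigma(t)$ (or delivered immediately when $\sigma(t)=0$). Crucially, this deposit operation depends only on the current $\sigma(t)$ and on quantities already present in the augmented state. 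The plant update $x(t+1)=Ax(t)+Bw_1(t)$ likewise reads off only the current buffer and the current switch. The whole point is that the non-Markovian sum over past $t'$ in~(\ref{eq-sigma-cl-independent}) has been replaced by the running buffer $w_1(t)$, which carries the accumulated effect of the past forward in a finite register.

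The second step is bookkeeping: verify that with the augmented state $\zeta(t)=(x(t),v(t-d_{max}),\dots,v(t-1),w_1(t),\dots,w_{d_{max}}(t))$, the dynamics can be written $\zeta(t+1)=N_{\sigma(t)}\zeta(t)$ with $N_{\sigma(t)}$ depending only on $\sigma(t)\in D$, so that one obtains a genuine arbitrary-switching system over exactly $|D|$ matrices. Here I would count dimensions to match the claimed $n+2d_{max}m$: the plant contributes $n$, the history of controller outputs $v(t-d_{max}),\dots,v(t-1)$ contributes $d_{max}m$, and the mailbox $w_1,\dots,w_{d_{max}}$ contributes another $d_{max}m$, for the stated total. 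I would then check the consistency/correctness claim, i.e.\ that the projection of $\zeta(t)$ onto the original coordinates reproduces the trajectory of~(\ref{switched-closed-loop-independent}) for every switching signal, so that stability of the augmented system is equivalent to stability of the original one. This amounts to an induction on $t$ showing $w_1(t)=\sum_{t':t'+\sigma(t')=t}v(t')$, which follows directly from the shift-and-deposit update and the definition of the slot index.

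I expect the main obstacle to be defining the deposit operation cleanly so that it is a \emph{linear} map of the augmented state depending only on $\sigma(t)$. The subtlety is that $v(t)=K\tilde v(t)$ must be routed into the correct mailbox slot $\sigma(t)$, which means the matrix blocks realizing this routing change with $\sigma(t)$ but must not require knowledge of any $\sigma(t')$ for $t'<t$; once the buffer is in place this is automatic, but one must be careful that the quantity being deposited is expressible through the current state (which it is, since $\tilde v(t)$ is part of $\zeta(t)$) and that the special case $\sigma(t)=0$ (immediate arrival, routing into $w_1$ / direct use) is handled without off-by-one errors. The remaining verification that the resulting finite family has exactly $|D|$ matrices and the correct dimension is then routine linear algebra.
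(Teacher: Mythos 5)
Your proposal is correct and is essentially the paper's own argument: your ``mailbox'' variables $w_j(t)$ are exactly the quantities $u_j(t)$ of Equation~(\ref{eq-ud}), and your augmented state $\zeta(t)$ coincides (up to ordering of the blocks) with the paper's $\tilde w(t)=(x(t),u_1(t),\dots,u_{d_{max}}(t),v(t-d_{max}),\dots,v(t-1))$ of dimension $n+2d_{max}m$. The shift-and-deposit update you describe, with the new output $v(t)=K\tilde v(t)$ routed into slot $\sigma(t)$, is precisely the system of linear equations the paper writes down via the variables $z_{s,d}$, yielding one matrix per $d\in D$.
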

\begin{proof}
The main idea of the proof is to make use of \emph{both $u_d(t)$ and $v(t)$} in the closed loop state-space representation of the system.  Recall that $u_d(t)$ is the sum of the previous outputs of the controller, that are forecast to arrive at the plant at time $t+d,$ and that $v(t)$ is the output of the controller at time $t.$  See Equations (\ref{eq-ud}) and (\ref{eq-vt}).\\
Of course, the controller does not know $u_d(t)$ in this delay-independent setting (we will take that into account in the construction), but this variable is needed in order to reconstruct the feedback signal.  On the other hand, $v(t)$ is needed in order to represent the memory of the controller.  We now formally describe the state-space, and then the matrices. We set $d_{max}=\max(D)$ as usual.

We define the state-space vector $\tilde w(t)\in \re^{n+2d_{max}m}$ as follows:
$$ \tilde w (t)=(x(t),u_1(t),\dots, u_{d_{max}}(t),v(t-d_{max}),\dots,v(t-1)).$$
Then, for any $d\in D,$ $\tilde w(t+1)$ can be expressed as a linear function of $\tilde w (t).$ Indeed, for any $d\in D,$
the following equations describing the computation of $\tilde w(t+1)$ are linear, and only depend on $\tilde w(t):$
\begin{eqnarray}
\nonumber x(t+1)&=&Ax(t)+Bu_1(t) + B z_{0,d}(t),\\
\nonumber u_s(t+1)&=&u_{s+1}(t) + z_{s,d}(t), \quad 1\leq s\leq d_{max} \\
\nonumber V(t+1)&=& (0,\dots,0,K_0x_{t})^T+\\&&\begin{pmatrix}0&I&\dots&0\\0&0&\ddots&0\\0&\dots&0&I\\K_1 &\dots &&K_{d_{max}}\end{pmatrix}V(t).
\end{eqnarray}
In the above equations, $$ K=(K_0,K_1,\dots,K_{d_{max}})\in \re^{m\times (n+d_{max} m)}$$ is the linear controller, $$V(t)=(v(t-d_{max}),\dots,v(t-1))^T\in\re^{d_{max} m} $$ is the memory of the controller, and $z_{s,d}:\, s=0,\dots, d_{max}$ represents the controller output, to be fed back to the plant with a delay $d:$ \begin{eqnarray}\nonumber z_{s,d}&= &K\begin{pmatrix}x(t)\\V(t) \end{pmatrix} \quad \mbox{ if } s=d,\\\nonumber&=& 0\quad \mbox{otherwise}.\end{eqnarray}
\end{proof}


\subsection{Analysis}

Theorem \ref{thm-ssreduction-independent} implies that, given a \switcheddelay, one can build a corresponding set of matrices, and analyze the stability of the corresponding switching system in order to check for the stability of the given system. In particular the following corollary is equivalent to Corollary \ref{cor-approx-dep}:
\begin{corollary}
For any $\epsilon,$ there exists an algorithm which, given a \switcheddelay{} and delay-independent controller, computes in finite time the worst rate of growth of the system, up to an error of $\epsilon.$
\end{corollary}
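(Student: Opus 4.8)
The plan is to mirror the reasoning already used for Corollary \ref{cor-approx-dep} in the delay-dependent case, now leaning on Theorem \ref{thm-ssreduction-independent} instead of Proposition \ref{prop-delay-dependent-cl}. The essential observation is that the two corollaries are structurally identical: each asserts that an arbitrary-switching linear system admits an algorithm approximating its worst rate of growth (equivalently, its joint spectral radius $\hat\rho$) to within any prescribed $\epsilon$, and such an algorithm is already available in the literature for genuine unconstrained switching sets. The only gap between the delay-independent system \eqref{switched-closed-loop-independent} and that clean framework is the correlation in the matrix succession noted right before Theorem \ref{thm-ssreduction-independent}, namely that $M(\sigma,t)$ depends on the whole past of $\sigma$ rather than on $\sigma(t)$ alone. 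But Theorem \ref{thm-ssreduction-independent} removes exactly this obstruction by exhibiting an \emph{equivalent} arbitrary-switching representation on the augmented state $\tilde w(t)\in\re^{n+2d_{max}m}$, driven by a fixed finite set of $|D|$ matrices with no constraint on the switching signal.

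First I would invoke Theorem \ref{thm-ssreduction-independent} to replace the given $n$-dimensional delay-independent \switcheddelay{} by its equivalent switching system on $\re^{n+2d_{max}m}$, whose dynamics are $\tilde w(t+1)=N_{\sigma(t)}\tilde w(t)$ for $N_{\sigma(t)}$ ranging over a finite set $\mathcal N$ of $|D|$ matrices under arbitrary switching. Because the reduction is dynamics-preserving, the worst rate of growth of $\tilde w(t)$ coincides with that of the original closed-loop state $\tilde v(t)$: the extra coordinates of $\tilde w$ are precisely the $u_s$ and the controller memory, so boundedness and asymptotic growth transfer in both directions. This reduces the problem to approximating the worst rate of growth of a genuine unconstrained switching set, i.e. computing $\hat\rho(\mathcal N)$ up to error $\epsilon$.

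Second, I would apply verbatim the same decision procedures cited in the proof of Corollary \ref{cor-approx-dep}, namely \cite[Corollary 3.1]{protasov-jungers-blondel09} or \cite[Theorem 6.1]{ajprhscc11}, to the finite matrix set $\mathcal N$. These guarantee that for any real $r>0$ one can decide in finite time whether $\exists K>0:\forall\sigma,\forall t,\ |\tilde w(t)|\leq K(r+\epsilon)^t$, or whether $\exists K>0,\exists\sigma:\ |\tilde w(t)|\geq K(r-\epsilon)^t$; bisection on $r$ then pins down the worst rate of growth to within $\epsilon$. Translating back through the equivalence of $\tilde w$ and $\tilde v$ yields the stated corollary for the delay-independent controller.

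Honestly, I expect essentially no real obstacle here, which is presumably why the paper states this result as a corollary: all the difficulty has been front-loaded into Theorem \ref{thm-ssreduction-independent}, whose construction of the augmented state space is the genuinely nontrivial step that converts a history-dependent switching rule into an unconstrained one. The only point demanding a sentence of care is the verification that the growth rate is faithfully preserved under the augmentation, so that the approximation obtained for $\mathcal N$ is exactly the approximation wanted for the original system; once Theorem \ref{thm-ssreduction-independent} is granted, this is immediate and the rest is a direct citation of the same off-the-shelf algorithms used in Corollary \ref{cor-approx-dep}.
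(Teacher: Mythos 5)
Your proposal is correct and follows exactly the route the paper intends: the paper gives no separate proof, simply noting that Theorem \ref{thm-ssreduction-independent} yields an equivalent unconstrained switching system so that the corollary follows just as Corollary \ref{cor-approx-dep} did, via the decision procedures of \cite{protasov-jungers-blondel09} and \cite{ajprhscc11} and bisection on $r$. Your added remark that the growth rate is preserved under the state augmentation is a sensible (if brief) piece of diligence that the paper leaves implicit.
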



\subsection{Design}

We end this section by commenting on the design problem, that is, to find suitable values for the entries in $K$ so that the corresponding system is stable. Since it is harder to control such a system than with a delay-dependent controller, this problem might well have no solution, depending on the set $D$ and the actual value of $A$ and $B.$ Below is a simple example, which shows that several situations can be possible, already in the arguably simplest case $n=m=1,$ and $D=\{0,1\}.$
\begin{ex}\label{ex-indep}
In this example we assume that the controller stores the previous value of $x(t)$ instead of the previous value of $v(t).$  We make this choice for the sake of clarity, in order to have simpler matrices in the equivalent switching system. It is easy to check that in this slightly modified setting, one can still apply the trick of Theorem \ref{thm-ssreduction-independent} and obtain the following three-dimensional switching system as a representation of a one-dimensional \switcheddelay:
\begin{equation}\label{eq-ssreduction-independent}\begin{pmatrix}x({t})\\ x({t+1})\\ u_1(t+1)\end{pmatrix}= M_{\sigma(t)} \begin{pmatrix}x({t-1})\\ x(t)\\ u_1(t)\end{pmatrix}.\end{equation}

where
$$ M_0=\begin{pmatrix}0& 1 &0\\ bk_1 &a+ bk_2& 1\\0& 0 &0 \end{pmatrix}, \quad M_1=\begin{pmatrix}0 &1& 0\\ 0& a& 1\\ bk_1 &bk_2 &0\end{pmatrix}. $$

The controller stores the value of $x({t-1})$ for one iteration. It then makes use of it and of the current value $x(t)$ for computing its output $v(t).$ If the delay is 1 $v(t)$ is put "in the queue" (third entry of the vector), while if the delay is zero it is directly added in the plant in order to compute $x({t+1}).$ Let us consider $b=1,\ D=\{0,1 \}$. Depending on the other values, we obtain the following cases:
\begin{itemize}
\item For $a>3,$ system (\ref{eq-ssreduction-independent}) is unstable, whatever controller $K$ is applied. This can be seen by observing that in this case, $trace(M_1)\geq 3,$ hence $M_1$ is unstable. By Theorem \ref{thm-ssreduction-independent}, the corresponding \switcheddelay{} is uncontrollable.
\item For $a<1$ the system is clearly stabilizable without any controller ($k_1=k_2=0$), since it is the case for the autonomous stable dynamical system.
\item For a=1.1 the system is controllable without using memory (i.e. $k_1=0$), e.g. by taking $k_2=-0.5,$ and the switching system (\ref{eq-ssreduction-independent}) is stable.
\item Finally, for $a=2$ the system is still controllable, but in this case one needs $k_1\neq 0$: indeed, if $k_1=0$ one can restrict himself to the $2 \times 2$ lower-right corner of the matrices, and this subsystem is unstable because $trace(M_0)\geq 2$. On the other hand stabilizing controllers exist with $k_1\neq 0,$ as for instance $k_1=0.4,k_2=-1.5.$
\end{itemize}
In the last two cases, in order to check for the validity of the proposed controller, one can check that the joint spectral radius of the set $\{M_0,M_1\}$ corresponding to the proposed controller is smaller than one, for instance by making use of the JSR toolbox, available on the web \cite{jsr-toolbox}.
\end{ex}


\section{Conclusion}\label{secConclusions}

In this paper, we studied the algorithmic analysis and design of linear discrete-time systems with switched delays. Although many different control strategies are possible, we focused on two simple models and showed that many variate situations can occur. We presented favorable cases, where the problem is algorithmically solvable in polynomial time, and proved that the problem is NP-hard in general. More importantly, we provided an algorithmic procedure to decide stability of a given system with arbitrarily small error (of course, with running time increasing when the error decreases).  Our work raises many natural questions.  We end this paper by mentioning some of them.\\  For the \emph{design questions}, in the one-dimensional case, we showed that the problem is easy in the delay-dependent framework, but the possibility to design a controller in a delay-independent framework seems less straightforward. This leads to our first question:
\begin{opq}What are the particular values of $a,b$ and the sets $D$ in the one dimensional case (see Example \ref{ex-indep}) for which there is a stabilizing delay-independent linear feedback?
Is there an efficient algorithm for deciding the stability/boundedness in this case?\end{opq}
%
%
\begin{opq} Is the stability/boundedness also NP-hard to decide in the delay-independent framework?
\end{opq}
\begin{opq} In our framework, the switching signal might cause the feedback signal to be empty at some times.  We implemented it as a zero signal, but one might for instance implement a hold, which would in this case repeat the previous feedback signal.
What are the situations where the implementation of this hold improves controllability?  How to recognize such situations?
\end{opq}
\begin{opq} How is the situation changed if one is interested in the stability with probability one instead of the worst-case stability?
\end{opq} Suppose that each delay in $D$ appears with a certain probability.  Then, one might only require stability with probability one for System (\ref{sdsystem}).
It is known that the almost sure stability of a classical switching system is ruled by its so-called \emph{Lyapunov Exponent.} Recently, convex optimization techniques have been proposed in order to approximate this quantity \cite{protasov-jungers-lyap}.  So, for the analysis question, these techniques can be applied to the equivalent switching system reformulation that we provided in this paper.  We leave the design question for further work.

\def\cprime{$'$} \newcommand{\noopsort}[1]{} \newcommand{\singleletter}[1]{#1}


\end{document}